\documentclass{article}
\usepackage[utf8]{inputenc}
\pagenumbering{arabic}
\usepackage{ifluatex}
\usepackage{pdfpages}
\usepackage[T1]{fontenc}
\usepackage{lmodern}
\usepackage{etoolbox}
\usepackage{amsmath,amssymb,amsthm} 
\usepackage{cmap}
\usepackage{mathrsfs}
\usepackage{verbatim}
\usepackage{lastpage}   
\usepackage{indentfirst}  
\usepackage{color}      
\usepackage{graphicx}   
\usepackage{microtype}
\usepackage{filecontents}
\usepackage{enumerate}
\usepackage{color}
\usepackage{verbatim}

\newcommand{\R}{\mathbb{R}}

\newtheorem{theorem}{Theorem}[section] 
\newtheorem{lemma}[theorem]{Lemma}     
\newtheorem{corollary}[theorem]{Corollary}



\newtheorem{remark}{Remark}

\newtheorem{example}{Example}

\title{Complete stationary spacelike surfaces in an $n$-dimensional Generalized Robertson-Walker spacetime}
\author{Danilo Ferreira, Eraldo A. Lima Jr. and Alfonso Romero}
\date{27 August 2020}

\begin{document}
	\maketitle
	
	\begin{abstract}
		Several uniqueness results for non-compact complete stationary spacelike surfaces in an $n(\geq 3)$-dimensional Generalized Robertson Walker spacetime are obtained. In order to do that, we assume a natural inequality involving the Gauss curvature of the surface, the restrictions of the warping function and the sectional curvature of the fiber to the surface. This inequality gives the parabolicity of the surface. Using this property, a distinguished non-negative superharmonic function on the surface is shown to be constant, which implies that the stationary spacelike surface must be totally geodesic. 
		Moreover, non-trivial examples of stationary spacelike surfaces in the four dimensional Lorentz-Minkowski spacetime are exposed to show that each of our assumptions is needed.\\
	\end{abstract}

	\noindent Keywords: Stationary surfaces, Parabolic Riemannian surfaces, Generalized Robertson-Walker spacetimes.
	\footnote{53C43 (primary), 53A10, 53C17, 53Z05  (secondary)}

	\section{Introduction} 
	\label{intro}
	Spacelike surfaces with zero mean curvature in an $n(\geq 3)$-dimensional spacetime are surfaces whose induced metric is Riemannian  and they are locally critical points of the area functional. In the  $3$-dimensional Lorentzian setting, the expression maximal is more utilized than stationary because of the fact that such a surface is a local maximum of the area functional in relevant spacetimes (see \cite{MT:80} for instance). Agreeing with \cite{AB:98}, for ambient spacetimes of dimension greater than three, the term``stationary'' is more suitable for a spacelike surface with zero mean curvature. Stationary spacelike surfaces in $4$-dimensional spacetimes are a relevant role in mathematical Relativity. Indeed, a stationary spacelike surface may be seem as a limit of trapped surfaces which are inside the horizon of events around a singularity. Trapped surfaces are usually considered to be compact. We are interested here in parabolic stationary surfaces, i.e., stationary spacelike surfaces $\Sigma^2$ such that  the only nonnegative superharmonic functions on $\Sigma^2$ are the constants. Obviously, this property is satisfied in the compact case. The study of stationary spacelike surfaces has always been subject for many researchers in the past decades. For instance, in \cite{AA:09} Al\'ias and Albujer proved that in a Lorentzian product $-\mathbb{R}\times M^2\equiv (\mathbb{R}\times M^2,-dt^2+g_M)$ with Gauss curvature of $M^2$ satisfying $K_M\geq 0$, any complete maximal surface is totally geodesic, moreover if $M^2$ is not flat then these surfaces are just the slices $\{t_0\}\times M^2$. Further in \cite{AA:11} they developed their results for surfaces with non-empty boundary. We also observe that in \cite{A:08} Albujer constructed maximal surfaces in $-\mathbb{R}\times \mathbb{H}^2$, where $\mathbb{H}^2$ is the hyperbolic plane of constant Gauss curvature $-1$, in order to justify these curvature restriction on the Riemannian surface $M^2$. Moreover, new uniqueness properties of complete maximal surfaces in Lorentzian product spacetimes $-I\times M^2$, where $K_M\geq -\kappa$ for some constant $\kappa >0$ were obtained by the second and the third authors in \cite{LR:16}, by means of an extension of a well-known result by Nishikawa in \cite{Nis}. In \cite{RR:10} the third author and Rubio proved several uniqueness results for complete maximal surfaces in $-\mathbb{R}\times_f\mathbb{R}^2\equiv (\mathbb{R}\times \mathbb{R}^2,-dt^2+f(t)^2g_0)$ which is a $3$-dimensional Robertson-Walker spacetime with fiber the Euclidean plane $(\mathbb{R}^2,g_0)$ and warping function $f:\mathbb{R} \to \mathbb{R}^+$ extending the uniqueness results of Latorre and the third author in \cite{LR:01}. The results in \cite{AA:09}
	were extended to $3$-dimensional GRW spacetimes by Caballero, the third author and R.M. Rubio in \cite{CRR}. 
	
	It was proved by Al\'ias, Estudillo and the third author in \cite{AER:96} that the only compact stationary spacelike surfaces $\Sigma^2$ in a $3$-dimensional GRW spacetime $-I\times_{f}M$ with Gauss curvature satisfying 
	\begin{equation}\label{eq I}
		K_{\Sigma}\geq \frac{f'(\tau)^{2}+K_{M}}{f(\tau)^{2}},
	\end{equation}
	where $\tau$ is the restriction of the projection $\pi_{I}$ to $\Sigma^2$ and $K_{M}$ stands for the sectional curvature of $M$ restricted to $\Sigma^2$, are the totally geodesic ones. This result was obtained using a universal integral inequality whose equality characterizes the totally case, \cite[Theorem 9]{AER:96}. As the Example \ref{Ex1} below shows, the assumption \eqref{eq I} does not imply the same conclusion if the hypothesis compact is changed to complete, in general. Thus, the following question arises in a natural way,
	\begin{quote} {\it
			Under what assumptions a (non-compact) complete stationary spacelike surface in a GRW spacetime with Gauss curvature satisfying \eqref{eq I} must be totally geodesic? }
	\end{quote} 
	\vspace{2mm}   
	In this paper we will give several answers to this question, focusing our attention in natural assumptions on the  Gauss curvature, that lead to the parabollicity of the spacelike surface, and a remarkable smooth function naturally defined on the surface, that satisfies a certain partial differential equation from the fact that the mean curvature vector field vanishes everywhere.

	\section{Preliminaries}
	Let $(M^{n-1},g)$ be an $(n-1)$-dimensional Riemannian manifold and  let $I\subset \R$ be an open interval of the real line $\R$. An $n$-dimensional Generalized Robertson-Walker ($GRW$) spacetime  $\overline{M}^{n}:=-I\times_{f} M^{n-1}$is the warped product with base $(I,-dt^2)$, fiber $(M^{n-1},g)$  and warping function $f:I\rightarrow \R^{+}$. Thus, it is a time orientable Lorentzian manifold with the metric
	\begin{equation}\label{metric}
		\langle\ ,\ \rangle = -\pi_I^*dt^{2}+ f^{2}(\pi_I)\pi_M^*g,
	\end{equation}
	where, as usual, $\pi_I$, $\pi_M$ denote the corresponding projections onto $I$, $M^{n-1}$ respectively. Thus, $\overline{M}^{n}$ becomes a spacetime when it is endowed of the time orientation defined by the timelike vector field $\partial / \partial t$ \cite{ARS:95}. In the case $f=1$ the GRW spacetime is called static and it is denoted by $-I\times M^{n-1}$, in other words, a static GRW spacetime is the Lorentzian product of a negative definite open interval and a Riemannian manifold.
	
	Let $x:\Sigma^2\rightarrow \overline{M}^{n}$ be a spacelike surface\footnote{Any spacelike surface will be assumed to be connected.}, i.e, $x$ is an immersion and the induced metric $x^{*}\langle\ ,\ \rangle$ on $\Sigma^2$ is Riemannian. The timelike vector field $T := f(\pi_I)\partial / \partial t\in \mathfrak{X}(\overline{M}^{n}),$ decomposes along $x$ as follows
	\begin{equation}\label{T}
		T = T^{\top}+T^{N}
	\end{equation}
	where, at any point of $\Sigma^2$, $T^{\top}$ is the  tangent component  and $T^{N}$ is the normal component of $T$. 
	The vector field $T$ satisfies $\overline{\nabla}_{X}T = f'(\pi_I)X$, for all $X\in \mathfrak{X}(\overline{M}^n)$, where $\overline{\nabla}$ is the Levi-Civita connection of the Lorentzian metric \eqref{metric}, i.e, it is conformal with $\mathcal{L}_{T}\langle\ ,\ \rangle = 2f'(\pi_I)\langle\ ,\ \rangle$ and closed. This property on $\overline{M}^{n}$ is  translated, via the Gauss and Weingarten formulas (see \cite[Chapter 4]{O'N:83} for instance), to $\Sigma^2$ as it follows
	\begin{equation}\label{conn}
		\nabla_{X}T^{\top} = A_{T^{N}}X+f'(\tau)X,\ \ \nabla^{\perp}_{X}T^{N} = -\sigma(T^{\top},X)
	\end{equation}
	where $X\in \mathfrak{X}(\Sigma)$, $\nabla$ is the Levi-Civita connection of the induced metric, which is denoted by the same symbol as in \eqref{metric}, $A_{T^N}$ is the Weingarten endomorphism corresponding to $T^{N}\in \mathfrak{X}^{\perp}(\Sigma),$ $\tau:=\pi_I \circ x$, $\nabla^{\perp}$ is the normal connection and $\sigma$ is the second fundamental form.
	
	Let us consider the smooth function $u:= -\langle T^{N},T^{N}\rangle = f(\tau)^{2}+|T^{\top}|^{2}\geq f(\tau)^{2}>0$ on $\Sigma^2.$ From the first equation in \eqref{conn} we get the following expression for the gradient of $u$, $\nabla u = 2A_{T^{N}}T^{T}$. Therefore 
	\begin{equation}\label{grad u}
		|\nabla u|^{2} = 4\langle A^{2}_{T^{N}}T^{\top},T^{\top}\rangle.
	\end{equation}
	In the case the mean curvature vector field of $\Sigma^2$ vanishes identically, we call $\Sigma^2$ a stationary spacelike surface. Under previous assumption, previous formula is written as
	\begin{equation}\label{gradu2}
		|\nabla u|^{2}= 2 \text{tr}(A^{2}_{T^{N}})|T^{\top}|^{2} ,
	\end{equation}
	because in this case we have $A^{2}_{T^{N}}-\frac{1}{2}\text{tr}(A^{2}_{T^{N}})I = 0$, where $I$ is  the identity transformation.
	
	As shown in \cite{AER:96}, from the Gauss and Codazzi equations, when $\textbf{H} = 0$, formulas \eqref{conn} and the expression of the curvature 
	tensor $\overline{R}$ of $\overline{M}^{n}$  in terms  of the warping function and the curvature of $M$ \cite[Prop. 7.42]{O'N:83}, we  get for the Laplacian of $u$
	
	\begin{equation}\label{Delta u}
		\Delta u  = 2\left(K_{\Sigma} - \frac{f''(\tau)}{f(\tau)}\right)|T^{\top}|^{2}+ 2\text{tr}(A^{2}_{T^{N}}).
	\end{equation}
	A direct computation from \eqref{gradu2} and \eqref{Delta u} gives
	
	\begin{lemma}\label{Deltalog}
		Let $x:\Sigma^2\rightarrow \overline{M}^{n}$ be a stationary spacelike surface. For the function $u=-\langle T^{N},T^{N}\rangle>0$ on $\Sigma^2$ we have 
		\begin{equation}
			\Delta \log u \!=\! 2\left(K_{\Sigma}-\frac{f''(\tau)}{f(\tau)}\right)-\frac{2f(\tau)^{2}}{u^2}\left\lbrace \left(K_{\Sigma}-\frac{f''(\tau)}{f(\tau)}\right)u-\text{tr}(A^{2}_{T^{N}})\right\rbrace .
		\end{equation}
		
	\end{lemma}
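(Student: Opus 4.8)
The plan is to start from the elementary pointwise identity for the Laplacian of the logarithm of a positive function, valid here because $u>0$ on $\Sigma^2$:
\[
\Delta \log u = \frac{\Delta u}{u} - \frac{|\nabla u|^{2}}{u^{2}},
\]
and then feed in the two expressions already established in the excerpt. This reduces the whole statement to an algebraic simplification.

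First I would substitute \eqref{Delta u} and \eqref{gradu2} into the identity above. Writing $\beta := K_{\Sigma}-\frac{f''(\tau)}{f(\tau)}$ and $s := \text{tr}(A^{2}_{T^{N}})$ for brevity, these formulas read $\Delta u = 2\beta|T^{\top}|^{2}+2s$ and $|\nabla u|^{2}=2s\,|T^{\top}|^{2}$, so that
\[
\Delta \log u = \frac{2\beta|T^{\top}|^{2}+2s}{u} - \frac{2s\,|T^{\top}|^{2}}{u^{2}}.
\]
Next I would eliminate $|T^{\top}|^{2}$ by means of the defining relation $u=f(\tau)^{2}+|T^{\top}|^{2}$, i.e.\ $|T^{\top}|^{2}=u-f(\tau)^{2}$, introduced just before \eqref{grad u}. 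Expanding the first term as $\frac{\Delta u}{u}=2\beta-\frac{2\beta f(\tau)^{2}}{u}+\frac{2s}{u}$ and the second as $\frac{|\nabla u|^{2}}{u^{2}}=\frac{2s}{u}-\frac{2s\,f(\tau)^{2}}{u^{2}}$, the two $\frac{2s}{u}$ contributions cancel against each other, and collecting the surviving terms yields
\[
\Delta \log u = 2\beta-\frac{2\beta f(\tau)^{2}}{u}+\frac{2s\,f(\tau)^{2}}{u^{2}} = 2\beta-\frac{2f(\tau)^{2}}{u^{2}}\bigl(\beta u - s\bigr),
\]
which is exactly the asserted identity once $\beta$ and $s$ are written back in full.

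Since the lemma is flagged as ``a direct computation'', there is no real obstacle to overcome: the only point requiring care is the bookkeeping of the cancellation of the $\frac{s}{u}$ terms produced by $\Delta u/u$ and $|\nabla u|^{2}/u^{2}$, which is precisely what converts the naive difference quotient into the compact form displayed in the statement. I would also remark that the inequality $u\geq f(\tau)^{2}>0$ guarantees that $\log u$ is well defined and smooth, so the identity holds pointwise on all of $\Sigma^2$.
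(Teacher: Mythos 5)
Your proof is correct and follows exactly the route the paper intends: the lemma is introduced there with the words ``a direct computation from \eqref{gradu2} and \eqref{Delta u}'', which is precisely the substitution of those two formulas into $\Delta \log u = \Delta u/u - |\nabla u|^{2}/u^{2}$ together with the relation $|T^{\top}|^{2}=u-f(\tau)^{2}$ that you carry out. The algebraic bookkeeping, including the cancellation of the $2\,\mathrm{tr}(A^{2}_{T^{N}})/u$ terms, checks out.
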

	In the order to go further, around any $p\in \Sigma^2$ consider a local orthonormal normal frame $\{\xi_{1},...,\xi_{n-2}\}$ where $\xi_{n-2}$ is, at any point, collinear to $T^{N}$. Thus, we have $\langle\xi_{i},\xi_{j}\rangle = \delta_{ij},\ \ 1\leq i,j\leq n-3,\ \langle\xi_{n-2},\xi_{n-2}\rangle = -1$. Under the assumption that  $\Sigma^2$ is stationary, the Gauss equation becomes
	\begin{equation}\label{Gauss eq}
		2K_{\Sigma} =2\overline{K} - \sum_{i=1}^{n-3}\text{tr}(A^{2}_{\xi_{i}})+\frac{1}{u}\text{tr}(A^{2}_{T^{N}}) 
	\end{equation}
	where $\overline{K}$ is, at any point $q\in \Sigma^2$, the sectional curvature in $\overline{M}^{n}$ of the spacelike tangent plane $\text{d}x_{q}(T_{q}\Sigma)\subset T_{x(q)}\overline{M}^{n}.$ Next, $\overline{K}$ may be expressed in terms of the warping function and the sectional curvature $K_{M}$ of the fiber as it follows \cite[ Lemma 2]{AER:96}.
	
	\begin{lemma}\label{lem K}
		Let $\Sigma^2$ be a spacelike surface in $\overline{M}^{n}$. Then the sectional curvature in $\overline{M}^{n}$ of the spacelike plane tangent to $\Sigma^2$ is given by \[\overline{K} = \frac{f''(\tau)}{f(\tau)}+\frac{f'(\tau)^{2}-f''(\tau)f(\tau)}{f(\tau)^4}u+\frac{u}{f(\tau)^4}K _ {M}.\]
	\end{lemma}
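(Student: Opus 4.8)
The plan is to compute $\overline{K}$ directly from the warped product structure, by evaluating the curvature tensor $\overline{R}$ of $\overline{M}^{n}$ on an orthonormal basis of the tangent plane and then rewriting the result in terms of $u$. Fix $q\in\Sigma^2$ and an orthonormal basis $\{e_1,e_2\}$ of the spacelike plane $\mathrm{d}x_q(T_q\Sigma)$, so that $\overline{K}=\langle\overline{R}(e_1,e_2)e_2,e_1\rangle$. Writing $\partial_t$ for $\partial/\partial t$, the splitting of $T\overline{M}^{n}$ into $\mathrm{span}(\partial_t)$ and the fiber directions is orthogonal, so I would decompose $e_i = a_i\,\partial_t + W_i$, with $W_i$ tangent to the fiber and $a_i=-\langle e_i,\partial_t\rangle$. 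Orthonormality of $\{e_1,e_2\}$ together with $\langle\partial_t,\partial_t\rangle=-1$ then gives the three relations $|W_1|^2=1+a_1^2$, $|W_2|^2=1+a_2^2$ and $\langle W_1,W_2\rangle=a_1a_2$, which are precisely what makes the final algebra collapse.

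Next I would expand $\overline{R}(e_1,e_2)e_2$ by multilinearity into the curvature terms built from the horizontal/vertical types $\overline{R}(\partial_t,W_j)\partial_t$, $\overline{R}(\partial_t,W_j)W_k$ and $\overline{R}(W_1,W_2)W_2$, evaluating each by O'Neill's warped product curvature identities \cite[Prop. 7.42]{O'N:83}. Here the base $(I,-\mathrm{d}t^2)$ is one--dimensional, hence flat, so the only surviving base data are the Hessian $H^f(\partial_t,\partial_t)=f''(\tau)$ and the norm of the gradient; the Lorentzian signature of the base is essential, yielding $\langle\mathrm{grad}\,f,\mathrm{grad}\,f\rangle=-f'(\tau)^2$. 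Carrying this out produces a base/mixed contribution $\tfrac{f''(\tau)}{f(\tau)}\bigl(2a_1a_2\langle W_1,W_2\rangle-a_1^2|W_2|^2-a_2^2|W_1|^2\bigr)$ and a fiber contribution $\tfrac{K_M+f'(\tau)^2}{f(\tau)^2}\bigl(|W_1|^2|W_2|^2-\langle W_1,W_2\rangle^2\bigr)$, where converting O'Neill's intrinsic fiber curvature into the genuine sectional curvature $K_M$ requires rescaling the fiber metric $g$ by the factor $f(\tau)^2$ and using that the $(1,3)$--curvature tensor is invariant under a constant conformal change at the point $q$.

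Substituting the orthonormality relations simplifies the first bracket to $-(a_1^2+a_2^2)$ and the second to $1+a_1^2+a_2^2$, so that $\overline{K}=-\tfrac{f''(\tau)}{f(\tau)}(a_1^2+a_2^2)+\tfrac{K_M+f'(\tau)^2}{f(\tau)^2}(1+a_1^2+a_2^2)$. Finally I would translate $a_1^2+a_2^2$ into $u$: since $T^{\top}=\sum_i\langle T,e_i\rangle e_i$ and $\langle T,e_i\rangle=-f(\tau)a_i$, one gets $|T^{\top}|^2=f(\tau)^2(a_1^2+a_2^2)$, whence $u=f(\tau)^2+|T^{\top}|^2$ gives $a_1^2+a_2^2=u/f(\tau)^2-1$ and $1+a_1^2+a_2^2=u/f(\tau)^2$. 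Inserting these and grouping the $u/f(\tau)^4$ terms yields exactly the claimed expression. I expect the only genuine obstacle to be the bookkeeping of signs: one must keep the Lorentzian sign of $\langle\mathrm{grad}\,f,\mathrm{grad}\,f\rangle$ and O'Neill's curvature sign convention consistent throughout, since a slip there flips the sign of the $f''(\tau)$ and $f'(\tau)^2$ terms while leaving the $K_M$ term intact, producing a formula that is wrong in a way that is easy to miss.
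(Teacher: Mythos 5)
Your computation is correct and is essentially the intended argument: the paper itself does not prove this lemma but quotes it from \cite[Lemma 2]{AER:96}, and that proof is exactly your route — decompose an orthonormal basis of the tangent plane as $e_i=a_i\partial_t+W_i$, evaluate $\langle\overline{R}(e_1,e_2)e_2,e_1\rangle$ via O'Neill's warped-product curvature formulas \cite[Prop.~7.42]{O'N:83} with the Lorentzian sign $\langle\mathrm{grad}\,f,\mathrm{grad}\,f\rangle=-f'(\tau)^2$, and convert $a_1^2+a_2^2$ into $u$ through $|T^{\top}|^2=f(\tau)^2(a_1^2+a_2^2)$. Your intermediate brackets check out (the Hessian terms sum to $\tfrac{f''}{f}(2a_1a_2\langle W_1,W_2\rangle-a_1^2|W_2|^2-a_2^2|W_1|^2)=-\tfrac{f''}{f}(a_1^2+a_2^2)$ and the fiber term gives $\tfrac{K_M+f'^2}{f^2}(1+a_1^2+a_2^2)$, noting that $W_1,W_2$ are automatically linearly independent since $|W_i|^2=1+a_i^2$), and substituting $a_1^2+a_2^2=u/f(\tau)^2-1$ reproduces the stated formula exactly.
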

	Now using the previous result, the Gauss equation \eqref{Gauss eq} can  be rewritten to get
	\begin{align*}
		K_{\Sigma}  =\frac{f''(\tau)}{f(\tau)}+\frac{f'(\tau)-f''(\tau)f(\tau)}{f(\tau)^{4}}u+\frac{u}{f(\tau)^{4}}K_{M}-\frac{1}{2}\sum_{i=1}^{n-3}\text{tr}(A^{2}_{\xi_i})+\frac{1}{2u}\text{tr}(A^{2}_{T^{N}}).
	\end{align*}
	Taking into account that the gradient on $\Sigma^2$  of $\tau$ satisfies $\nabla \tau =-\partial_{t}^{\top}$, we obtain $|\nabla \tau|^{2} = (u-f(\tau)^2)/f(\tau)^{2}$. Therefore,
	\begin{align}\label{KS}
		K_{\Sigma} = &\frac{f'(\tau)^{2}+K_{M}}{f(\tau)^{2}}+\left\lbrace\frac{K_{M}}{f(\tau)^{2}}-(\log f)''(\tau)\right\rbrace|\nabla \tau|^{2}+\frac{1}{2u}\text{tr}(A^2_{\xi_i})\nonumber\\
		&-\frac{1}{2}\sum_{i=1}^{n-3}\text{tr}(A^{2}_{\xi_i})-\frac{f''(\tau)}{f(\tau)}.
	\end{align}
	
	Now we can rewrite the conclusion of Lemma \ref{Deltalog} with the help of the formula \eqref{KS} as it follows.
	\begin{lemma}\label{Deltalog2}
		Let $x:\Sigma^2\rightarrow \overline{M}^{n}$  be a stationary spacelike surface. For the function $u = -\langle T^{N},T^{N}\rangle>0$ on $\Sigma^2$ we have
		\begin{equation}\label{Deltalog eq}
			\Delta \log u = 2\left(K_{\Sigma}-\frac{f'(\tau)^{2}+K_{M}}{f(\tau)^{2}}\right)+\frac{f(\tau)^{2}}{u^2}\left\lbrace \text{tr}(A^{2}_{T^{N}})+u\sum_{i=1}^{n-3}\text{tr}(A^{2}_{\xi_{i}})\right\rbrace.
		\end{equation}
	\end{lemma}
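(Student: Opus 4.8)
The plan is to substitute the Gauss-equation identity \eqref{KS} into the conclusion of Lemma \ref{Deltalog} and reduce the whole statement to one algebraic simplification; the only geometric input is the relation $|\nabla\tau|^2 = (u-f(\tau)^2)/f(\tau)^2$, equivalently $u/f(\tau)^2 = 1+|\nabla\tau|^2$. To keep track of the terms I abbreviate
\begin{equation*}
P := \frac{K_M}{f(\tau)^2}-(\log f)''(\tau),\qquad S := \sum_{i=1}^{n-3}\text{tr}(A^2_{\xi_i}),\qquad Q := \text{tr}(A^2_{T^N}).
\end{equation*}

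First I would rewrite Lemma \ref{Deltalog} in the factored form
\begin{equation*}
\Delta\log u = 2\left(K_\Sigma-\frac{f''(\tau)}{f(\tau)}\right)\left(1-\frac{f(\tau)^2}{u}\right)+\frac{2f(\tau)^2}{u^2}\,Q,
\end{equation*}
which follows at once by collecting the two terms that carry $K_\Sigma-f''(\tau)/f(\tau)$. The decisive step is then to rewrite this curvature gap. Reading off \eqref{KS} (whose last term is $\tfrac{1}{2u}Q$, the coefficient $1/u$ being the one attached to $\text{tr}(A^2_{T^N})$ in the Gauss equation \eqref{Gauss eq}) and adding $\tfrac{f'(\tau)^2+K_M}{f(\tau)^2}-\tfrac{f''(\tau)}{f(\tau)}=P$, the identity $1+|\nabla\tau|^2=u/f(\tau)^2$ collapses the two $P$-terms into one, giving
\begin{equation*}
K_\Sigma-\frac{f''(\tau)}{f(\tau)} = \frac{u}{f(\tau)^2}\,P-\frac12 S+\frac{1}{2u}Q.
\end{equation*}

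Finally I would substitute this expression into the factored form and expand. The $P$-contribution, weighted by $\tfrac{u}{f(\tau)^2}\bigl(1-f(\tau)^2/u\bigr)=|\nabla\tau|^2$, produces $2P|\nabla\tau|^2$; the $S$-contribution produces $-S\bigl(1-f(\tau)^2/u\bigr)$; and the $Q$-contribution coming from the curvature gap, namely $\bigl(1-f(\tau)^2/u\bigr)\tfrac1u Q$, combines with the leftover $\tfrac{2f(\tau)^2}{u^2}Q$ to give $\tfrac1u Q+\tfrac{f(\tau)^2}{u^2}Q$. Regrouping $2P|\nabla\tau|^2-S+\tfrac1u Q$ back as $2\big(K_\Sigma-\tfrac{f'(\tau)^2+K_M}{f(\tau)^2}\big)$ via \eqref{KS}, and the remaining terms as $\tfrac{f(\tau)^2}{u^2}(Q+uS)$, yields exactly \eqref{Deltalog eq}. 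The computation is purely algebraic; the only point demanding care is to keep the $\tfrac{1}{2u}Q$ buried in the curvature gap distinct from the explicit $Q$-term of Lemma \ref{Deltalog}, since the two reconcile only after the substitution $1-f(\tau)^2/u=f(\tau)^2|\nabla\tau|^2/u$.
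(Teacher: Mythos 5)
Your proposal is correct and follows essentially the same route as the paper's own (one-line) proof: substitute the Gauss-equation identity \eqref{KS} into the conclusion of Lemma \ref{Deltalog} and simplify using $|\nabla\tau|^2=(u-f(\tau)^2)/f(\tau)^2$, which is exactly how you organize the computation, and your algebra checks out term by term. It is also to your credit that you read \eqref{KS} in its corrected form: not only did you explicitly fix the misprint $\text{tr}(A^2_{\xi_i})$ for $\text{tr}(A^2_{T^{N}})$ in the $\tfrac{1}{2u}$-term, but your curvature-gap identity $K_\Sigma-\tfrac{f''(\tau)}{f(\tau)}=\tfrac{u}{f(\tau)^2}P-\tfrac12 S+\tfrac{1}{2u}Q$ also implicitly discards the spurious trailing $-\tfrac{f''(\tau)}{f(\tau)}$ in the printed \eqref{KS} (the unlabeled display preceding \eqref{KS} confirms it should not be there), without which the substitution would not reproduce \eqref{Deltalog eq}.
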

	Note that in the case $u$ is constant, formula \eqref{Deltalog eq} gives 
	\begin{equation}\label{eq K}
		K_{\Sigma}-\frac{f'(\tau)^2+K_{M}}{f(\tau)^{2}}\leq 0
	\end{equation}
	everywhere on $\Sigma^2$, and the equality holds in \eqref{eq K} if, and only if, $A_{T^{N}} = A_{\xi_{i}} = 0,\ i = 1,2,...,n-3$, i.e., if, and only if, $\Sigma^2$ is totally geodesic in $\overline{M}^{n}$.
	
	\section{Curvature and Parabolicity}
	A (non-compact) $n(\geq 2)-$dimensional Riemannian manifold $(S,g)$ is said to be parabolic if the only nonnegative superharmonic functions on $S$ are the constants.
	When $n=2$, this notion is very close to the classical parabolicity for Riemann surfaces. Moreover, it is strongly related to the behavior of the Gauss curvature $K$ of the Riemannian surface $(S,g)$. Thus, a classical result by Ahlfors and Blanc-Fiala-Hubber \cite{K:87} asserts that if $K\geq 0$, then a complete Riemannian surface must be parabolic.
	
	If $B_r$ and $B_R$, $0<r<R$, denote geodesic balls centered at the same point of a Riemannian manifold $(S,g)$ with $\dim S\geq 2,$ the quantity \[\frac{1}{\mu_{r, R}}:= \int_{A_{r,R}}|\nabla\omega_{r,R}|^{2} dS\]
	is called the capacity of the annulus $A_{r,R}:= B_{R}\setminus \overline{B}_r,$ where $\omega_{r,R}$ is the harmonic measure of $\partial B_{R}$ with respect of the problem
	\begin{equation}
		\Delta\omega = 0\ \ \text{in}\ \ A_{r,R},\ \omega  =0\ \ \text{on}\ \ \partial B_r\ \ \text{and}\ \ \omega = 1\ \ \ \text{on}\ \  \partial B_{R}.
	\end{equation}
	It is well known that a complete Riemannian manifold is parabolic if, and only if, for any fixed arbitrary point of $S$ we have
	\[\lim_{R\rightarrow \infty}\frac{1}{\mu_{r,R}} = 0.\]
	independent of $r$.
	
	To end this section, we recall the following technical result \cite[Lemma 2]{RR:10}.
	
	\begin{lemma}\label{Lem 2.2}
		Let $S$ be an $n(\geq 2)$-dimensional Riemannian manifold and let $v\in\mathcal{C}^{2}(S)$ that satisfies $v\Delta v \geq 0$. Let $B_R$ be a geodesic ball of radius $R$ in $S$. For any $r$ such that $0 < r < R$ we have
		\[\int_{B_r}|\nabla v|^{2}dV\leq \frac{4\sup_{B_{R}}v^{2}}{\mu_{r,R}}, \]
		where $B_r$ denotes the geodesic ball of radius $r$ centered at $p\in M$ and $\dfrac{1}{\mu_{r,R}}$ is the \textit{capacity} of annulus $B_{R}\setminus \overline{B}_{r}$.
	\end{lemma}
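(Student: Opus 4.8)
The plan is to test the Dirichlet energy of $v$ against a cutoff function built out of the harmonic measure $\omega_{r,R}$, and then convert the hypothesis $v\Delta v\geq 0$ into an energy inequality via a single integration by parts. Concretely, I would define $\phi:=1$ on $\overline{B}_r$, $\phi:=1-\omega_{r,R}$ on the annulus $A_{r,R}=B_R\setminus\overline{B}_r$, and $\phi:=0$ on $S\setminus B_R$. Because $\omega_{r,R}$ equals $0$ on $\partial B_r$ and $1$ on $\partial B_R$, this $\phi$ is continuous, equals $1$ exactly where we want to estimate the energy, is compactly supported in $\overline{B}_R$, and satisfies $\nabla\phi=-\nabla\omega_{r,R}$ on $A_{r,R}$ while $\nabla\phi=0$ elsewhere. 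Thus the entire contribution of $\nabla\phi$ is concentrated on the annulus and is controlled by the capacity $1/\mu_{r,R}$.

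The central computation is the divergence identity
\[\mathrm{div}(\phi^2 v\,\nabla v)=\phi^2|\nabla v|^2+2\phi v\langle\nabla\phi,\nabla v\rangle+\phi^2 v\,\Delta v.\]
Integrating over $B_R$, the boundary integral vanishes since $\phi=0$ on $\partial B_R$, and the term $\int_{B_R}\phi^2 v\,\Delta v\,dV$ is nonnegative by the hypothesis $v\Delta v\geq 0$ (note $\phi^2\geq 0$). Hence
\[\int_{B_R}\phi^2|\nabla v|^2\,dV\leq -2\int_{B_R}\phi v\langle\nabla\phi,\nabla v\rangle\,dV\leq 2\int_{B_R}|\phi|\,|v|\,|\nabla\phi|\,|\nabla v|\,dV.\]
A Young inequality $2ab\leq\tfrac12 a^2+2b^2$ with $a=\phi|\nabla v|$ and $b=|v|\,|\nabla\phi|$ absorbs the energy term into the left-hand side, leaving
\[\int_{B_R}\phi^2|\nabla v|^2\,dV\leq 4\int_{B_R} v^2|\nabla\phi|^2\,dV.\]

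To finish, I would specialize each side. On the left, $\phi\equiv 1$ on $B_r$ gives $\int_{B_r}|\nabla v|^2\,dV\leq\int_{B_R}\phi^2|\nabla v|^2\,dV$. On the right, $\nabla\phi$ is supported in $A_{r,R}$, where it coincides with $-\nabla\omega_{r,R}$, so bounding $v^2$ by $\sup_{B_R}v^2$ and recognizing the remaining integral as the capacity, $\int_{A_{r,R}}|\nabla\omega_{r,R}|^2\,dV=1/\mu_{r,R}$, yields exactly $\int_{B_r}|\nabla v|^2\,dV\leq 4\sup_{B_R}v^2/\mu_{r,R}$.

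The main obstacle I anticipate is purely analytical, namely the regularity needed to legitimize the integration by parts. Geodesic spheres need not be smooth because of cut-locus points, so $\omega_{r,R}$ is in general only a weak solution of the harmonic-measure problem, smooth and harmonic in the interior of $A_{r,R}$ but merely continuous up to the boundary. I would therefore carry out the argument with $\phi\in W^{1,2}$ of compact support, interpreting the divergence identity in its integrated (weak) form and, if needed, approximating $B_R$ by a smooth exhaustion, so that no genuine boundary integral over the non-smooth geodesic spheres is ever required and the two endpoint matchings of $\omega_{r,R}$ enter only through the trace values $0$ and $1$.
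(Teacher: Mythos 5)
Your proof is correct: the cutoff $\phi=1-\omega_{r,R}$ on the annulus (extended by $1$ on $\overline{B}_r$ and $0$ outside $B_R$), the divergence identity for $\phi^2 v\,\nabla v$, the sign of $\int \phi^2 v\,\Delta v$, and the absorption via $2ab\leq \tfrac12 a^2+2b^2$ produce exactly the stated constant $4$, and your closing remark on interpreting the integration by parts weakly handles the only delicate point (non-smoothness of geodesic spheres at the cut locus). Note that the paper states this lemma without proof, recalling it from \cite{RR:10}; your argument is essentially the standard Caccioppoli-type capacity estimate proved there, so there is nothing to reconcile beyond that.
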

	
	\section{Main results}
	We will obtain below several uniqueness theorems using the previous results on parabolicity.
	The first result establishes sufficient conditions for a complete stationary surface to be totally geodesic.
	
	\begin{theorem}\label{teo 3.2}
		Let $x:\Sigma^2\to \overline{M}^{n}$ be a complete stationary spacelike surface in a GRW spacetime $\overline{M}^{n}=-I\times_fM^{n-1}$. If     
		\vspace{1mm}
		\begin{equation}\label{Kgeqzero}
			K_\Sigma\geq \frac{f'(\tau)^{2}+K _ {M}}{f(\tau)^{2}}\geq 0 
		\end{equation}
		and the function $u = - \langle T^{N},T^{N}\rangle$ on $\Sigma^2$ satisfies   
		\begin{equation}\label{subquadratic}  
			u \leq Cf(\tau)^2+C,  
		\end{equation} 
		\vspace{1mm}
		for some positive constant $C$, then $\Sigma^2$ must be totally geodesic.
	\end{theorem}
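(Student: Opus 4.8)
The plan is to combine the parabolicity coming from the curvature hypothesis with the subharmonicity of a suitable positive function built from $u$, and then to use the growth control \eqref{subquadratic} together with the capacity estimate of Lemma \ref{Lem 2.2} to force $u$ to be constant; totally geodesicness then drops out of the equality discussion following Lemma \ref{Deltalog2}.

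First I would record the two structural facts. The left inequality in \eqref{Kgeqzero} says $K_\Sigma\geq (f'(\tau)^2+K_M)/f(\tau)^2$, so the first bracket on the right-hand side of \eqref{Deltalog eq} is nonnegative; the second bracket is manifestly nonnegative since $\mathrm{tr}(A^2_{T^N})\geq 0$, $\mathrm{tr}(A^2_{\xi_i})\geq 0$ and $u>0$. Hence $\Delta\log u\geq 0$, i.e. $\log u$ is subharmonic. Writing $\Delta u=u\,\Delta\log u+|\nabla u|^2/u$ and using $u>0$ gives $\Delta u\geq 0$ as well; equivalently $v:=u^{1/2}>0$ is subharmonic, so that $v\,\Delta v\geq 0$, which is exactly the hypothesis of Lemma \ref{Lem 2.2}. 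On the other hand \eqref{Kgeqzero} forces in particular $K_\Sigma\geq 0$, and together with completeness this places us in the Ahlfors--Blanc-Fiala-Huber setting recalled above, so $\Sigma^2$ is parabolic; equivalently $\lim_{R\to\infty}1/\mu_{r,R}=0$ for every fixed $r$.

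With $v=u^{1/2}$ I would apply Lemma \ref{Lem 2.2} to obtain, for $0<r<R$,
\begin{equation*}
\int_{B_r}|\nabla u^{1/2}|^2\,dV\leq \frac{4\sup_{B_R}u}{\mu_{r,R}}.
\end{equation*}
Here the growth hypothesis \eqref{subquadratic} is tailored to the right-hand side: $\sup_{B_R}u\leq C\sup_{B_R}f(\tau)^2+C$. Once this supremum is controlled against the capacity so that the quotient tends to $0$ as $R\to\infty$, the left-hand side vanishes for every $r$, whence $u^{1/2}$, and therefore $u$, is constant on $\Sigma^2$. Finally, if $u$ is constant then $\Delta\log u=0$, so both nonnegative summands on the right of \eqref{Deltalog eq} vanish; the vanishing of the second one gives $A_{T^N}=A_{\xi_i}=0$ for all $i$, equivalently equality in \eqref{eq K}, i.e. $\Sigma^2$ is totally geodesic, exactly as in the equality case discussed after Lemma \ref{Deltalog2}.

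The main obstacle is the middle step: parabolicity only furnishes $1/\mu_{r,R}\to 0$ with no a priori rate, so to make the quotient $\sup_{B_R}u/\mu_{r,R}$ vanish one cannot merely invoke \eqref{subquadratic} pointwise, but must ensure that $\sup_{B_R}u$, equivalently $\sup_{B_R}f(\tau)^2$, does not outgrow the parabolic capacity. When $f(\tau)$ is bounded on $\Sigma^2$ (for instance when $\tau(\Sigma^2)$ is relatively compact in $I$) the bound \eqref{subquadratic} already makes $u$ bounded and the argument closes at once; the delicate case is unbounded $f$, where the precise interplay between the admissible growth of $u$ permitted by \eqref{subquadratic} and the growth of the capacity $1/\mu_{r,R}$ is what has to be exploited.
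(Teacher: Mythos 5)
Your reduction steps are sound: $\Delta\log u\geq 0$ follows from \eqref{Deltalog eq} and the left inequality in \eqref{Kgeqzero}, hence $v=u^{1/2}$ is subharmonic and $v\Delta v\geq 0$; the right inequality gives $K_\Sigma\geq 0$ and parabolicity via Ahlfors--Blanc-Fiala-Huber; and your endgame (constant $u$ forces both nonnegative terms in \eqref{Deltalog eq} to vanish, so $A_{T^N}=A_{\xi_i}=0$) is exactly right. Indeed, in the special case where $u$ is bounded your argument is essentially the paper's proof of Corollary \ref{cor1}. But the step you yourself flag as ``the main obstacle'' is a genuine gap, not a deferred technicality, and nothing in your proposal closes it: parabolicity furnishes $1/\mu_{r,R}\to 0$ with no rate whatsoever, while \eqref{subquadratic} allows $u$ (through $f(\tau)^2$) to be unbounded on $\Sigma^2$, so the quotient $\sup_{B_R}u/\mu_{r,R}$ in your capacity estimate has no reason to tend to zero. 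The ``precise interplay'' you invoke between the growth of $u$ and the decay of the capacity is not available under the hypotheses, and the route through Lemma \ref{Lem 2.2} therefore only proves the bounded-$u$ corollary, not the theorem.

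The missing idea, which is how the paper proceeds, is a change of unknown that makes the growth hypothesis act \emph{pointwise in the differential inequality} rather than in a capacity estimate. Set $h(u)=(1+u)^{-\alpha}$ with $\alpha>0$; this is automatically bounded between $0$ and $1$. Using \eqref{gradu2} with $|T^{\top}|^2=u-f(\tau)^2$ and splitting $\Delta\log u$, one computes
\begin{equation*}
\Delta h(u)=h'(u)\Tilde{\psi}\,u+\frac{h'(u)\,\mathrm{tr}(A^{2}_{T^{N}})}{u}\,\theta,
\qquad
\theta=f(\tau)^2+2\bigl(u-f(\tau)^2\bigr)+\frac{2u\,h''(u)}{h'(u)}\bigl(u-f(\tau)^2\bigr),
\end{equation*}
where $\Tilde{\psi}\geq 0$ by \eqref{Kgeqzero} and $h'(u)\leq 0$. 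Since $u-f(\tau)^2\geq 0$, the condition $\theta\geq 0$ reduces to $u\leq \frac{2\alpha+1}{2\alpha}f(\tau)^2+\frac{1}{2\alpha}$, which is precisely guaranteed by \eqref{subquadratic} upon choosing $\alpha=1/(2C)$. Thus $h(u)$ is a nonnegative superharmonic function on the parabolic surface $\Sigma^2$, and parabolicity in its raw form --- no rate, no capacity comparison --- forces $h(u)$, hence $u$, to be constant, after which your final paragraph finishes the proof. In short: the quadratic growth bound should be fed into the sign of $\Delta h(u)$ for a bounded transform of $u$, not into the right-hand side of the capacity inequality.
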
 
	
	\begin{proof}
		Recall first of all the well-known formula $\Delta \phi(h) = \phi'(h)\Delta h+\phi''(h)|\nabla h|^2$, that holds true for  any smooth function $h$ on $\Sigma^2$ and $\phi$ a real function two times differentiable. If we put $h=u$, $\phi(u)=\log u$ and $\Delta \log u=\psi$, we can write  
		\begin{equation}\label{new}  
			\Delta u=\psi u+\frac{|\nabla u|^2}{u}.
		\end{equation}    
		In order to obtain the conclusion let us define the auxiliary function   
		
		\[
		h(t)=\frac{1}{(1+t)^\alpha},
		\]
		$t>0$, for each positive constant $\alpha$. We have    
		
		\begin{align*}
			h'(t)&=\frac{-\alpha}{(1+t)^{\alpha+1}},\\[1mm]
			h''(t)&=\frac{\alpha(\alpha+1)}{(1+t)^{\alpha+2}},
		\end{align*}
		and
		\[
		\frac{h''(t)}{h'(t)} =-\frac{\alpha+1}{1+t}.
		\]
		Consider $h(u)$ and note that $h(u)< 1$, i.e., $h(u)$ is bounded from above. Using \eqref{new} we get
		
		\begin{align*}
			\Delta h(u)&=h'(u)\Delta u+h''(u)|\nabla u|^2\\[1mm]
			&=h'(u)\left(\psi u+\frac{|\nabla u|^2}{u}\right)+h''(u)|\nabla u|^2.
		\end{align*}
		From \eqref{Deltalog2} let us observe that 
		\[\psi u=\Tilde{\psi} u+\frac{f(\tau)^2}{u}\text{tr}(A^{2}_{T^{N}})\]
		where 
		\[\Tilde{\psi} =2\left(K_\Sigma-\frac{f'(\tau)^{2}+K _ {M}}{f(\tau)^{2}}\right)+\frac{f(\tau)^{2}}{u}\sum_{i=1}^{n-3}\text{tr}(A^{2}_{\xi_{i}}) \geq 0.\]
		Here we considered  assumption \eqref{Kgeqzero}. Note that by \eqref{gradu2} we have
		\[|\nabla u|^{2} = 2\text{tr}(A^{2}_{T^{N}})(u-f(\tau)^2) \]
		with $|T^{\top}|^{2} = u -f(\tau)^{2}$. Therefore we obtain the following expression 
		\begin{equation}\label{new2}
			\Delta h(u)=h'(u)\Tilde{\psi}u+\frac{h'(u)\text{tr}(A^{2}_{T^{N}})}{u}\left(f(\tau)^2+2(u-f(\tau)^2)+\frac{2uh''(u)}{h'(u)}(u-f(\tau)^2)\right).
		\end{equation}
		Note that, $h'(u)\leq 0$, so if $f(\tau)^2+2(u-f(\tau)^2)+\dfrac{2uh''(u)}{h'(u)}(u-f(\tau)^2)$ is non-negative,  we have that $\Delta h(u)\leq 0,$ i.e., the non-negative function $h(u)$ on $\Sigma^2$ is superharmonic.\\
		
		{\bf Claim:} Since $h(t) = (1+t)^{-\alpha}$, if we denote  \[\theta:=f(\tau)^2+2(u-f(\tau)^2)+\frac{2uh''(u)}{h'(u)}(u-f(\tau)^2), \] then $\theta \geq 0$ for some  $0<\alpha\leq 1 $. \\
		
		In fact we have
		\begin{align}
			\theta &= f(\tau)^2+2(u-f(\tau)^2)-\frac{2u(u-f(\tau)^2)(\alpha+1)}{1+u}\\
			&=\frac{1}{1+u}[f(\tau)^2(1+u)+2(u-f(\tau)^2)(1+u)-2u(u-f(\tau)^2)(\alpha+1)].
		\end{align}
		Thus, it is sufficient to prove that
		\[f(\tau)^2(1+u)+2(u-f(\tau)^2)(1+u)-2u(u-f(\tau)^2)(\alpha+1)\geq 0, \]
		i.e.,
		\begin{align}
			-f(\tau)^2+(2\alpha+1)f(\tau)^2u+2u-2u^2\alpha \geq 0,\\[1mm]
			(u-f(\tau)^2)+(2\alpha +1)f(\tau)^2u+u-2u^2\alpha \geq 0.
		\end{align}
		Since $u-f(\tau)^2\geq 0$, we just need to prove that 
		\begin{equation}
			(2\alpha +1)f(\tau)^2u+u-2u^2\alpha \geq 0,
		\end{equation}   
		or equivalently
		\begin{equation}\label{eq bound of u}
			u\leq \frac{2\alpha+1}{2\alpha}f(\tau)^2+\frac{1}{2\alpha}
		\end{equation}
		that holds from our hypothesis \eqref{subquadratic} if we choose $\alpha>0$ small enough. Therefore, for $u\leq C f(\tau)^2+C$, where the constant $C= \dfrac{1}{2\alpha}$, we have \eqref{eq bound of u}, whence $\theta \geq 0$. Hence $h(u)$ is superharmonic. By \eqref{Kgeqzero}, using a classical result by Ahlfors and Blanc-Fiala-Hubber \cite{K:87} that asserts that a complete Riemannian surface with non-negative Gauss curvature must be parabolic, we have $h(u)$ is a constant,  which implies $u$ is also constant, whence $\Sigma^2$ is totally geodesic.
	\end{proof}
	
	When the function $u$ is bounded we have the following consequence. Although this result follows directly from Theorem \ref{teo 3.2} we give an alternative proof of a local character. 
	\begin{corollary}\label{cor1}
		Let $x:\Sigma^2\to \overline{M}^{n}$ be a complete stationary spacelike surface in a GRW spacetime $\overline{M}^{n}=-I\times_fM^{n-1}$, such that $u = - \langle T^N,T^N\rangle$ is bounded on $\Sigma^2$. If
		\begin{equation}\label{eq 3}
			K_\Sigma\geq \frac{f'(\tau)^{2}+K _ {M}}{f(\tau)^{2}}\geq 0.
		\end{equation}
		then $\Sigma^2$ must be totally geodesic.
	\end{corollary}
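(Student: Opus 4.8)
The plan is to record two things. First, the reduction to Theorem \ref{teo 3.2} is immediate: since $u$ is bounded there is a constant $C>0$ with $u\le C$, and because $f(\tau)^2>0$ one trivially has $u\le C\le Cf(\tau)^2+C$, so the subquadratic bound \eqref{subquadratic} holds and Theorem \ref{teo 3.2} gives the conclusion. The point of the corollary, however, is to display an argument of a more local nature, resting on the capacity estimate of Lemma \ref{Lem 2.2} rather than on the abstract characterization of parabolicity through superharmonic functions.

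For the alternative proof I would first show that $u$ itself is subharmonic. By Lemma \ref{Deltalog2} the quantity $\psi:=\Delta\log u$ is, under hypothesis \eqref{eq 3}, a sum of nonnegative terms: the factor $2\bigl(K_\Sigma-(f'(\tau)^2+K_M)/f(\tau)^2\bigr)$ is nonnegative by \eqref{eq 3}, while $\text{tr}(A^2_{T^N})$, $\text{tr}(A^2_{\xi_i})$ and the coefficient $f(\tau)^2/u^2$ are all nonnegative; hence $\psi\ge 0$. Then by the identity \eqref{new}, namely $\Delta u=\psi u+|\nabla u|^2/u$, and since $u>0$, every summand is nonnegative, so $\Delta u\ge 0$ on $\Sigma^2$.

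Next I would apply Lemma \ref{Lem 2.2} with $v=u$. As $u>0$ and $\Delta u\ge 0$ we have $u\,\Delta u\ge 0$, so the lemma yields, for every $0<r<R$,
\[
\int_{B_r}|\nabla u|^2\,dV\le \frac{4\sup_{B_R}u^2}{\mu_{r,R}}\le\frac{4C^2}{\mu_{r,R}},
\]
the last inequality using that $u$ is bounded. Since $K_\Sigma\ge 0$ and $\Sigma^2$ is complete, the classical Ahlfors and Blanc-Fiala-Hubber result \cite{K:87} makes $\Sigma^2$ parabolic, i.e. $1/\mu_{r,R}\to 0$ as $R\to\infty$. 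Letting $R\to\infty$ forces $\int_{B_r}|\nabla u|^2\,dV=0$ for every $r$, whence $\nabla u\equiv 0$ and $u$ is constant.

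Finally, with $u$ constant I would invoke the rigidity already recorded after Lemma \ref{Deltalog2}: from \eqref{Deltalog eq} with $\Delta\log u=0$ and the hypothesis \eqref{eq 3}, the two nonnegative contributions on the right-hand side must vanish simultaneously, so $\text{tr}(A^2_{T^N})=0$ and $\text{tr}(A^2_{\xi_i})=0$ for all $i$, i.e. $A_{T^N}=A_{\xi_i}=0$; hence $\Sigma^2$ is totally geodesic. The only delicate point I anticipate is using the capacity estimate in the correct direction, so that parabolicity genuinely yields $1/\mu_{r,R}\to 0$ and thereby annihilates the gradient integral; the remainder is a chain of sign observations. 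This is precisely where the \emph{local} character of the argument, through the annular capacities of Lemma \ref{Lem 2.2}, replaces the global superharmonic-function characterization exploited in Theorem \ref{teo 3.2}.
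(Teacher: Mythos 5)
Your proposal is correct and follows essentially the same route as the paper: the paper itself notes the immediate reduction to Theorem \ref{teo 3.2} and then gives exactly your alternative ``local'' argument, deriving $\Delta\log u\geq 0$ and $\Delta u\geq 0$ from Lemma \ref{Deltalog2} and \eqref{new}, applying Lemma \ref{Lem 2.2} with $v=u$ together with the boundedness of $u$, invoking Ahlfors--Blanc--Fiala--Huber parabolicity so that $1/\mu_{r,R}\to 0$ kills $\int_{B_r}|\nabla u|^2\,dV$, and concluding rigidity from the vanishing of both nonnegative terms in \eqref{Deltalog eq}. All sign verifications and the final identification $A_{T^N}=A_{\xi_i}=0$ match the paper's reasoning.
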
 
	\begin{proof}
		From \eqref{eq 3}, Lemma \ref{Deltalog2} gives
		\begin{equation}
			\Delta \log u \geq \frac{f(\tau)^{2}}{u^{2}}\left(\text{tr}(A^{2}_{T^{N}})+u\sum_{i=1}^{n-3}\text{tr}(A^{2}_{\xi_{i}})\right) \geq 0,
		\end{equation}
		and therefore
		\begin{equation} 
			\Delta u \geq \frac{f(\tau)^{2}}{u}\left(\text{tr}(A^{2}_{T^{N}})+u\sum_{i=1}^{n-3}\text{tr}(A^{2}_{\xi_{i}})\right)+\frac{|\nabla u|^2}{u}\geq 0.
		\end{equation}              
		Therefore, $u\Delta u \geq 0$ and using Lemma \ref{Lem 2.2}  we arrive to the following local integral estimation of the length of the gradient of $u$ 
		\begin{equation}\label{new3}
			\int_{B_r}|\nabla u|^{2}dV\leq \frac{4\sup_{B_{R}}u^{2}}{\mu_{r,R}}\leq \frac{4\sup_{\Sigma}u^{2}}{\mu_{r,R}} \,,
		\end{equation}
		where $R$ is the radius of a geodesic ball $B_R$ in $\Sigma^2$ centered at a point $p\in \Sigma^2$, and $B_r$, $0<r<R$, is a geodesic ball centered at the same point $p$ and contained in $B_R$. 
		
		Note that \eqref{eq 3} also says that the Gauss curvature of $\Sigma^2$ is nonnegative. Hence, the Ahlfors and Blanc-Fiala-Hubber Theorem can be used again to conclude that $\Sigma^2$ is parabolic. In this case, we know  
		$\lim_{R\rightarrow\infty} \dfrac{1}{\mu_{r,R}} = 0$, accordingly to  \cite{K:87}. Now the proof ends taking into account \eqref{new3} which implies that $\nabla u(p)=0$ for any $p\in \Sigma^2$, i.e., the function $u$ is constant on $\Sigma^2$   
		which implies that the equality holds in formula \eqref{eq K}. Therefore, $\Sigma^2$ must totally geodesic from formula \eqref{Deltalog eq}.
	\end{proof}
	
	\begin{corollary}
		Let $x:\Sigma^2\to \overline{M}^{n}$ be a complete stationary spacelike surface in a static GRW spacetime $\overline{M}^{n}= -I \times M^{n-1}$ such that $u=-\langle(\partial/\partial t)^N,(\partial/\partial t)^N\rangle$  (or equivalently $| \nabla \tau |$) is bounded on $\Sigma^2$. If 
		$K_\Sigma\geq {K _ {M}}\geq 0\label{Kgeqzero2}$, then $\Sigma^2$ must be totally geodesic.
	\end{corollary}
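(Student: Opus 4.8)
The plan is to recognize this statement as the direct specialization of Corollary \ref{cor1} to the static case $f\equiv 1$, so the whole argument reduces to checking that the hypotheses of Corollary \ref{cor1} are met and then invoking it. First I would record what it means for $\overline{M}^n = -I\times M^{n-1}$ to be static: the warping function is constant equal to $1$, hence $f'(\tau)=0$ and $f(\tau)^2 = 1$ identically along $\Sigma^2$. Substituting these into the curvature hypothesis \eqref{eq 3} of Corollary \ref{cor1}, namely $K_\Sigma \geq (f'(\tau)^2 + K_M)/f(\tau)^2 \geq 0$, the right-hand side collapses and the inequality becomes $K_\Sigma \geq K_M \geq 0$, which is exactly the assumption made here.

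Next I would reconcile the two presentations of the function $u$. Since $T = f(\pi_I)\,\partial/\partial t = \partial/\partial t$ when $f\equiv 1$, the function $u = -\langle T^N, T^N\rangle$ used throughout (and in Corollary \ref{cor1}) coincides with the $u = -\langle(\partial/\partial t)^N,(\partial/\partial t)^N\rangle$ of the present statement; no new quantity is introduced. To justify the parenthetical claim that boundedness of $u$ is equivalent to boundedness of $|\nabla\tau|$, I would recall the identity $|\nabla\tau|^2 = (u - f(\tau)^2)/f(\tau)^2$ established in the Preliminaries, which in the static case reads simply $|\nabla\tau|^2 = u - 1$. Consequently $u$ is bounded on $\Sigma^2$ if and only if $|\nabla\tau|$ is bounded, so either form of the boundedness assumption delivers precisely the hypothesis that Corollary \ref{cor1} requires.

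With the curvature inequality $K_\Sigma \geq K_M \geq 0$ and the boundedness of $u$ both verified, the conclusion that $\Sigma^2$ is totally geodesic follows at once by applying Corollary \ref{cor1}. I do not anticipate a genuine obstacle: the only point meriting a moment of care is confirming the consistency of the equivalence $|\nabla\tau|^2 = u - 1$, i.e.\ that $u \geq f(\tau)^2 = 1$, but this is immediate from $u = f(\tau)^2 + |T^\top|^2 \geq f(\tau)^2$ noted in the Preliminaries. Thus the proof is essentially a bookkeeping specialization, and the substantive analytic content—the parabolicity argument and the rigidity in formula \eqref{eq K}—has already been carried out in Corollary \ref{cor1}.
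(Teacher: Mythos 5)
Your proposal is correct and matches the paper's intent exactly: the paper states this corollary without proof precisely because it is the immediate specialization of Corollary \ref{cor1} to $f\equiv 1$, which is what you carry out. Your verification of the equivalence between boundedness of $u$ and of $|\nabla\tau|$ via $|\nabla\tau|^2=(u-f(\tau)^2)/f(\tau)^2=u-1$ is the right (and only) bookkeeping step needed.
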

	\hyphenation{pa-ra-bo-li-ci-ty}
	\begin{remark}
		The assumption $f'(\tau)^{2}+K_{M}\geq 0$ is only used at the ends of the proofs of Theorem \ref{teo 3.2} and Corolary \ref{cor1} to assert $K_\Sigma\geq 0$ because $K_\Sigma\geq \dfrac{f'(\tau)^{2}+K _ {M}}{f(\tau)^{2}}$. However, we arrive to the same conclusion in these results if we change the assumption \eqref{Kgeqzero} to
		\[
		K_{\Sigma}\geq \frac{f'(\tau)^{2}+K_{M}}{f(\tau)^{2}}\ \ \text{and}\ \ K_\Sigma\geq 0. 
		\]
		On the other hand, taking into account that the assumption $K_\Sigma\geq 0$ is only needed to assert that $\Sigma^2$ is parabolic, the same conclusion of Theorem \ref{teo 3.2} and Corolary \ref{cor1} is obtained, if we changed the assumption \eqref{Kgeqzero} by
		\[K_{\Sigma}\geq \frac{f'(\tau)^{2}+K_{M}}{f(\tau)^{2}}\ \ \text{and}\ \ \Sigma^2 \ \text{is parabolic}. \]
		Moreover, the parabolicity of $\Sigma^2$ can be also derived from other curvature assumptions which are well known in the literature \cite{K:87}, \cite{Li:2000}.
	\end{remark}
	Formula \eqref{Delta u} suggests  that other assumptions on the curvature  together with the parabolicity of the stationary surface implies its rigidity in the GRW spacetime. For the case that the function $u$ is controlled by the squared warping function and the surface has Gauss curvature nonnegative we have the following result,      
	
	\begin{theorem}\label{teo 2}
		Let $x:\Sigma^2\to \overline{M}^{n}$ be a complete stationary spacelike surface in a GRW spacetime $\overline{M}^{n}=-I\times_fM^{n-1}$. If     
		\vspace{1mm}
		\begin{equation}\label{new4}
			K_{\Sigma}\geq\max \left\{\,\frac{f''(\tau)}{f(\tau)},\, 0\,\right\} 
		\end{equation}
		and the function $u = - \langle T^{N},T^{N}\rangle$ on $\Sigma^2$ satisfies   
		\begin{equation}
			u \leq Cf(\tau)^2+C,  
		\end{equation} 
		\vspace{1mm}
		for some positive constant $C$, then 
		\begin{equation}
			K_{\Sigma} \leq \frac{f'(\tau)^{2}+K _ {M}}{f(\tau)^{2}}
		\end{equation}
		and equality holds if, and only if, $\Sigma^2$ is totally geodesic.
	\end{theorem}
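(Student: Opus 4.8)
The plan is to follow the scheme of the proof of Theorem~\ref{teo 3.2}, but to base the computation on Lemma~\ref{Deltalog} (equivalently on formula~\eqref{Delta u}) instead of on Lemma~\ref{Deltalog2}, so that the factor $P:=K_\Sigma-\frac{f''(\tau)}{f(\tau)}$ appears naturally; indeed $P\ge 0$ is precisely what the first half of hypothesis~\eqref{new4} provides. I will again use the auxiliary function $h(t)=(1+t)^{-\alpha}$, $t>0$, $\alpha>0$, which is decreasing and satisfies $\frac{h''(u)}{h'(u)}=-\frac{\alpha+1}{1+u}$, and I will show that $h(u)$ is a bounded, nonnegative, superharmonic function on $\Sigma^2$. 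Parabolicity will then force $h(u)$, and hence $u$, to be constant, after which both the asserted inequality and its equality case follow directly from~\eqref{eq K}.

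For the computation, set $\psi:=\Delta\log u$ and use~\eqref{new} in the form $\Delta u=\psi u+\frac{|\nabla u|^2}{u}$. Lemma~\ref{Deltalog} yields the decomposition
\[
\psi u=2P\,(u-f(\tau)^2)+\frac{2f(\tau)^2}{u}\,\text{tr}(A^2_{T^N}),
\]
while~\eqref{gradu2} gives $|\nabla u|^2=2\,\text{tr}(A^2_{T^N})(u-f(\tau)^2)$, recalling $|T^\top|^2=u-f(\tau)^2$. Substituting into $\Delta h(u)=h'(u)\Delta u+h''(u)|\nabla u|^2$ and collecting the terms carrying $\text{tr}(A^2_{T^N})$, I expect to reach
\[
\Delta h(u)=2P\,(u-f(\tau)^2)\,h'(u)+2\,\text{tr}(A^2_{T^N})\bigl[h'(u)+h''(u)(u-f(\tau)^2)\bigr].
\]
Since $P\ge 0$, $h'(u)\le 0$ and $u-f(\tau)^2\ge 0$, the first summand is nonpositive.

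The crux is the sign of the bracket multiplying $\text{tr}(A^2_{T^N})$. Factoring out $h'(u)$ and using $\frac{h''(u)}{h'(u)}=-\frac{\alpha+1}{1+u}$, the bracket equals $h'(u)\bigl[1-\frac{(\alpha+1)(u-f(\tau)^2)}{1+u}\bigr]$; as $h'(u)<0$, this is nonpositive exactly when $1+u\ge(\alpha+1)(u-f(\tau)^2)$, i.e. when
\[
u\le\frac{\alpha+1}{\alpha}\,f(\tau)^2+\frac{1}{\alpha}.
\]
This is where the growth hypothesis $u\le Cf(\tau)^2+C$ enters: taking $\alpha>0$ small enough that $\alpha\le 1/C$ (so that $\frac1\alpha\ge C$ and $\frac{\alpha+1}{\alpha}=1+\frac1\alpha\ge C$) makes the inequality hold everywhere on $\Sigma^2$, whence $\Delta h(u)\le 0$. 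Thus $h(u)$ is bounded (by $1$), nonnegative and superharmonic. Since $K_\Sigma\ge 0$ by the second half of~\eqref{new4}, the theorem of Ahlfors and Blanc-Fiala-Hubber shows that $\Sigma^2$ is parabolic, forcing $h(u)$ to be constant; hence $u$ is constant.

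Finally, with $u$ constant one has $\Delta\log u=0$, so Lemma~\ref{Deltalog2} collapses to~\eqref{eq K}, giving $K_\Sigma\le\frac{f'(\tau)^2+K_M}{f(\tau)^2}$ on all of $\Sigma^2$, with equality if and only if the nonnegative quantity $\text{tr}(A^2_{T^N})+u\sum_{i=1}^{n-3}\text{tr}(A^2_{\xi_i})$ vanishes identically, that is, if and only if $A_{T^N}=A_{\xi_i}=0$ for every $i$, i.e. $\Sigma^2$ is totally geodesic --- precisely the equivalence recorded right after~\eqref{eq K}. The only genuinely new ingredient relative to Theorem~\ref{teo 3.2} is routing the estimate through the $f''/f$ form of Lemma~\ref{Deltalog}, so that the weaker curvature assumption~\eqref{new4} suffices and the conclusion is read off from~\eqref{eq K} rather than proved from scratch; accordingly, the step deserving the most care is verifying the sign of $h'(u)+h''(u)(u-f(\tau)^2)$ and its reduction to the growth bound, which I expect to be the main (though mild) obstacle.
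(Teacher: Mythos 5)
Your proposal is correct and follows essentially the same route as the paper: your $h(u)=(1+u)^{-\alpha}$ is just the negative of the paper's auxiliary function $v=-(1+u)^{-\alpha}$, your displayed formula for $\Delta h(u)$ agrees (up to sign) with the paper's expression for $\Delta v$ obtained from \eqref{gradu2} and \eqref{Delta u}, and the sign analysis of the bracket, the choice $\alpha\leq 1/C$, the parabolicity via Ahlfors and Blanc-Fiala-Hubber, and the final appeal to \eqref{Deltalog eq} and \eqref{eq K} all match the paper's argument. Your detour through Lemma \ref{Deltalog} and the identity $\Delta u=\psi u+|\nabla u|^{2}/u$ is only cosmetic, since it reproduces exactly the decomposition of $\Delta u$ that the paper reads off directly from \eqref{Delta u}.
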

	
	\begin{proof}
		Let $u=- \langle T^{N},T^{N}\rangle$ and consider the function $v = \dfrac{-1}{(1+u)^{\alpha}}$ on $\Sigma^2$, where $\alpha >0$ is constant. Note that $-1< v < 0$, in particular, it is bounded from above. From \eqref{new4} we have $K_{\Sigma}\geq 0$, and thus, the  classical result by Ahlfors and Blanc-Fiala-Hubber \cite{K:87} ensures that $\Sigma^2$ is parabolic. Let us show now that $v$ is subharmonic.\\ 
		
		We compute first the gradient of $v$. For any $X\in \mathfrak{X}(\Sigma)$, we have
		
		\begin{align*}
			\left \langle \nabla\left(\frac{-1}{(1+u)^{\alpha}}\right),X\right\rangle &= X\left(\frac{-1}{(1+u)^{\alpha}}\right) = \frac{\alpha}{(1+u)^{\alpha+1}}X(u)\\[1mm]
			&=\left \langle\frac{\alpha \nabla u}{(1+u)^{\alpha +1}},X\right\rangle.
		\end{align*}
		Therefore,
		\begin{equation}\label{new5}
			\nabla v = \frac{\alpha}{(1+u)^{\alpha+1}}\,\nabla u
		\end{equation}      
		Next, using the well-known formula  $\text{div}(hX) = X(h)+h\,\text{div}(X)$, for any  smooth function $h$ on $\Sigma^2$ and any $X\in \mathfrak{X}(\Sigma)$ and \eqref{new5} we obtain
		\begin{equation}\label{new6}        
			\Delta v = \text{div} \left(\frac{\alpha}{(1+u)^{\alpha +1}}\nabla u\right) = \nabla u\left(\frac{\alpha}{(1+u)^{\alpha+1}}\right)+\frac{\alpha}{(1+u)^{\alpha+1}}\,\Delta u\,.  
		\end{equation}
		Taking into account that
		\begin{align*}
			\nabla u\left(\frac{\alpha}{(1+u)^{\alpha+1}}\right) &=  -\alpha(\alpha+1)(1+u)^{-\alpha-2}\,(\nabla u)(u)\\[1mm]
			&= - \frac{\alpha(\alpha+1)}{(1+u)^{\alpha+2}}|\nabla u|^{2},
		\end{align*}
		formula \eqref{new6} can be rewritten as follows\\
		\begin{equation}\label{new7} 
			\Delta v  = -\frac{\alpha(\alpha+1)}{(1+u)^{\alpha+2}}|\nabla u|^{2}+\frac{\alpha}{(1+u)^{\alpha+1}}\Delta u 
		\end{equation}\\
		By using \eqref{gradu2} and \eqref{Delta u}, formula \eqref{new6} gives
		\begin{align*}
			\Delta v &=\frac{2\alpha}{(1+u)^{\alpha+1}}\left(K-\frac{f(\tau)''}{f(\tau)}\right)(u-f(\tau)^2)&\\ &+\frac{2\alpha}{(1+u)^{\alpha+1}} \text{tr}(A^{2}_{T^{N}})\left[ 1-\frac{\alpha+1}{1+u}(u-f(\tau)^2)\right]&\\[1.5mm]
			&=\frac{2\alpha}{(1+u)^{\alpha+1}}\left(K-\frac{f(\tau)''}{f(\tau)}\right)(u-f(\tau)^2)&\\[1.5mm]
			&+\frac{2\alpha}{(1+u)^{\alpha+1}} \text{tr}(A^{2}_{T^{N}})\left[\frac{1-\alpha u+\alpha f(\tau)^2+f(\tau)^{2}}{1+u}\right].
		\end{align*}\\
		Note that, in order to prove that  $\Delta v\geq 0$ from previous formula, it remains to show that 
		$$
		1-\alpha u+\alpha f(\tau)^2+f(\tau)^{2}\geq 0,
		$$      
		or equivalently
		\[
		u\,\leq\, \frac{\alpha+1}{\alpha}f(\tau)^2+\dfrac{1}{\alpha}, 
		\]
		which holds true if we choose $\alpha=\dfrac{1}{C}$\,.\\
		
		Hence, the function $v<0$ satisfies $\Delta v\geq 0$ for this choice of $\alpha$. Therefore, $v$ is constant from the   parabolicity of $\Sigma^2$. From \eqref{new5}, $u$ is also constant. Formula \eqref{Deltalog eq} gives the inequality
		\[K_{\Sigma}\leq \frac{f'(\tau)^{2}+K_{M}}{f(\tau)^{2}}, \]
		and the equality holds if, and only if, $\Sigma^2$ is totally geodesic.
	\end{proof}
	
	In particular, we have the following result for the  case $u$ is bounded. 
	
	\begin{corollary}
		Let $x:\Sigma^2\to \overline{M}^{n}$ be a complete stationary spacelike surface in a GRW spacetime $\overline{M}^{n}=-I\times_fM^{n-1}$, such that $u = - \langle T^N,T^N\rangle$ is bounded on $\Sigma^2$. If
		
		\[K_{\Sigma}\geq\max \left\{\frac{f''(\tau)}{f(\tau)}, 0\right\} \]
		then
		\[K_{\Sigma} \leq \frac{f'(\tau)^{2}+K _ {M}}{f(\tau)^{2}} \]\\      
		and equality holds if, and only if, $\Sigma^2$ is totally geodesic. 
	\end{corollary}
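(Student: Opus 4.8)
The plan is to obtain this corollary as the bounded specialization of Theorem \ref{teo 2}, and then to record a shorter self-contained argument in the spirit of Corollary \ref{cor1}. First, observe that boundedness of $u$ is a particular case of the growth hypothesis \eqref{subquadratic}: if $u\leq M$ on $\Sigma^2$ for some constant $M>0$, then, since $f(\tau)^2>0$, we have $u\leq M\leq M\bigl(f(\tau)^2+1\bigr)=Cf(\tau)^2+C$ with $C=M$. As the curvature assumption is exactly \eqref{new4}, Theorem \ref{teo 2} applies and delivers both the inequality $K_\Sigma\leq (f'(\tau)^2+K_M)/f(\tau)^2$ and the equality characterization.

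Nevertheless, I would prefer to give a direct proof that bypasses the auxiliary function $v=-(1+u)^{-\alpha}$ used in Theorem \ref{teo 2}, exploiting that $u$ itself is now bounded. The first step is to note that \eqref{new4} forces $K_\Sigma\geq 0$, so the Ahlfors and Blanc-Fiala-Huber theorem \cite{K:87} guarantees that the complete surface $\Sigma^2$ is parabolic. The second step is to read subharmonicity of $u$ straight off \eqref{Delta u}: because $\text{tr}(A^2_{T^N})\geq 0$ (it is the trace of the square of a self-adjoint operator) and $|T^\top|^2\geq 0$, the hypothesis $K_\Sigma\geq f''(\tau)/f(\tau)$ yields
\[
\Delta u = 2\left(K_\Sigma-\frac{f''(\tau)}{f(\tau)}\right)|T^\top|^2+2\,\text{tr}(A^2_{T^N})\geq 0,
\]
so in particular $u\,\Delta u\geq 0$.

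The third step is the passage to constancy. Applying Lemma \ref{Lem 2.2} to $v=u$ gives the local estimate $\int_{B_r}|\nabla u|^2\,dV\leq 4\sup_\Sigma u^2/\mu_{r,R}$, exactly as in \eqref{new3}; letting $R\to\infty$ and using that parabolicity forces $1/\mu_{r,R}\to 0$ while $\sup_\Sigma u^2<\infty$, I conclude $\int_{B_r}|\nabla u|^2\,dV=0$ for every $r$, hence $u$ is constant on $\Sigma^2$. The final step uses \eqref{Deltalog eq}: with $u$ constant the left-hand side vanishes and, since the bracketed term is nonnegative, one obtains $K_\Sigma\leq (f'(\tau)^2+K_M)/f(\tau)^2$, with equality exactly when $\text{tr}(A^2_{T^N})=0$ and $\text{tr}(A^2_{\xi_i})=0$ for every $i$, i.e. when $\Sigma^2$ is totally geodesic, by the characterization recorded just after \eqref{eq K}.

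I do not expect a genuine obstacle here: the result is a corollary and every ingredient is already in place. The only point requiring care is the logical inference that a bounded function satisfying $u\,\Delta u\geq 0$ on a parabolic surface is constant; I would route this through the capacity estimate of Lemma \ref{Lem 2.2} rather than quoting the equivalent ``bounded subharmonic implies constant'' form of parabolicity, both to keep the argument self-contained and to make it run exactly parallel to the proof of Corollary \ref{cor1}.
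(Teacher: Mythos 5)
Your proposal is correct, and its first half is exactly the paper's proof: the paper states this corollary as an immediate particular case of Theorem \ref{teo 2}, precisely via your observation that $u\leq M$ together with $f(\tau)^2>0$ yields \eqref{subquadratic} with $C=M$, so nothing more is needed. Your second, direct argument is a genuinely different route that the paper does not give for this corollary (it gives the analogous local-character proof only for Corollary \ref{cor1}), and it is sound: the hypothesis \eqref{new4} splits into $K_\Sigma\geq f''(\tau)/f(\tau)$, which by \eqref{Delta u} and $\mathrm{tr}(A^2_{T^N})\geq 0$ (trace of the square of an operator self-adjoint with respect to the \emph{Riemannian} induced metric, so positive semidefinite) gives $\Delta u\geq 0$, and $K_\Sigma\geq 0$, which gives parabolicity by Ahlfors--Blanc-Fiala-Huber; then Lemma \ref{Lem 2.2} with the boundedness of $u$ forces $\nabla u\equiv 0$ as in \eqref{new3}, and the conclusion and the equality characterization follow from \eqref{Deltalog eq} with $u$ constant, exactly as in the remark after \eqref{eq K} (where vanishing of all $\mathrm{tr}(A^2_{\xi_i})$ and $\mathrm{tr}(A^2_{T^N})$ kills every Weingarten operator, since $T^N$ is timelike and hence $\{\xi_1,\dots,\xi_{n-3},T^N\}$ spans the normal bundle). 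The comparison is instructive: the route through Theorem \ref{teo 2} requires the auxiliary function $v=-(1+u)^{-\alpha}$ and a careful choice of $\alpha$ in order to absorb the growth $u\leq Cf(\tau)^2+C$, whereas your direct argument exploits boundedness to make $u$ itself the subharmonic test function, avoiding the auxiliary function entirely and yielding a local integral estimate; the price is that it only covers the bounded case, while Theorem \ref{teo 2} covers the more general growth condition.
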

	
	\section{Examples}
	\begin{example}\label{Ex1}
		Consider the four dimensional Lorentz-Minkowski spacetime $\mathbb{L}^{4}$, i.e., $\mathbb{L}^{4} = \R^{4}$ endowed with $-dx_{1}^{2}+dx_{2}^{2}+dx_{3}^{2}+dx_{4}^{2}.$ For each $w:\R^{2}\rightarrow \R$, harmonic and non-constant, the mapping
		\[\psi_{w}:\R^{2}\rightarrow \mathbb{L}^{4},\quad  
		\psi_{w}(x,y) = (w(x,y),x,y,w(x,y)) \]
		is clearly a spacelike immersion  whose induced metric on $\R^2$ is just the Euclidean one $dx^{2}+dy^{2}$, for any $w$. Obviously, the mean curvature vector field of $\psi_w$ is identically zero. Thus, $\psi_w : \R^2 \to \L^4$ is a stationary spacelike surface for any $w$, and it is totally geodesic if and only if $w(x,y)=ax+by+c$, for $a,b,c\in \R$.  
		
		For $\partial_{t} (= \partial x_1) = (1,0,0,0)$, write along $\psi_{w}$
		as $\partial_{t} = \partial_{t}^{\top}+\partial_{t}^{N}$ and then $-1 = \langle \partial_{t}^{\top},\partial_{t}^{\top}\rangle +\langle \partial_{t}^{N}, \partial_{t}^{N}\rangle $. We want to compute $\langle \partial_{t}^{N},  \partial_{t}^{N}\rangle .$ First we have
		\[\partial_{t}^{\top} = \Big\langle\partial_{t}^{\top},\frac{\partial \psi_{w}}{\partial x}\Big\rangle\frac{\partial \psi_{w}}{\partial x}+\Big\langle\partial_{t}^{\top},\frac{\partial \psi_{w}}{\partial y}\Big\rangle\frac{\partial \psi_{w}}{\partial y} \,,
		\]
		where
		\[
		\frac{\partial \psi_{w}}{\partial x} = \left(\frac{\partial w}{\partial x},\,1,\,0,\frac{\partial w}{\partial x}\right)\quad \text{and}\quad \frac{\partial \psi_{w}}{\partial y} = \left(\frac{\partial w}{\partial y},0\,,1\,,\frac{\partial w}{\partial y}\right) \]
		are orthonormal everywhere. Taking into account
		\[
		\Big\langle \partial_{t}^{\top},\frac{\partial \psi_{w}}{\partial x}\Big\rangle  =\Big\langle \partial_{t},\frac{\partial \psi_{w}}{\partial x}\Big\rangle = -\frac{\partial w}{\partial x}\quad \text{and}\quad \Big\langle \partial_{t}^{\top},\frac{\partial \psi_{w}}{\partial y}\Big\rangle  =\Big\langle \partial_{t},\frac{\partial \psi_{w}}{\partial y}\Big\rangle = -\frac{\partial w}{\partial y},
		\] 
		we have 
		\[
		\langle \partial_{t}^{\top}, \partial_{t}^{\top}\rangle  = \left(\frac{\partial w}{\partial x}\right)^{2}+\left(\frac{\partial w}{\partial y}\right)^{2}. 
		\]
		In the particular case, $w(x,y) = x^2-y^2,$ we have $\langle \partial_{t}^{\top}, \partial_{t}^{\top}\rangle  = 4x^{2}+4y^{2}$. Therefore, 
		\[
		u=-\langle \partial_{t}^{N}, \partial_{t}^{N}\rangle = 1+4x^{2}+4y^{2}\rightarrow \infty\quad \text{as}\quad x^2+y^2 \rightarrow \infty\,, 
		\]
		i.e., the function $u$ on $\R^2$ is unbounded. Therefore, assumption \eqref{subquadratic} in Theorem \ref{teo 3.2}  cannot be dropped (note that formula \eqref{Kgeqzero} is obviously satisfied because $K_{\R^2}=0$).
	\end{example}
	
	\begin{example}\label{ex2}
		Consider the holomorphic functions $\phi_{1}(z) = 1-z^{2},\ \phi_{2}(z) = i(1+z^{2}),\ \phi_{3}(z) = 2z$ and $\phi_{4}(z) = \sqrt{2}(1-z^{2}),\ z\in\mathbb{C}$ that satisfy
		\begin{equation}\label{phi1}
			-\phi_{1}^{2}(z)+\phi_{2}^{2}(z)+\phi_{3}^{2}(z)+\phi_{4}^{2}(z) = 0,
		\end{equation}
		\begin{equation}\label{phi2}
			-|\phi_{1}(z)|^{2}+|\phi_{2}(z)|^{2}+|\phi_{3}(z)|^{2}+|\phi_{4}(z)|^{2}=2(1+|z|^2)^2>0.
		\end{equation}
		for any $z\in \mathbb{C}$.  
		
		The map 
		\[
		\psi : \mathbb{C}\rightarrow \mathbb{L}^{4},\quad  \psi(z)= \text{Real}\int \big(1-z^{2},i(1+z^{2}),2z,\sqrt{2}(1-z^{2})\big)dz ,
		\]
		defines a stationary spacelike surface in $\L^4$ thanks to \eqref{phi1} and \eqref{phi2}, \cite{ER}, and the induced metric on $\mathbb{C}$ is, 
		\begin{equation}\label{induced metric}
			g:=(1+|z|^2)^2\,|dz|^2.
		\end{equation}
		If we put $z=x+iy$ then we can equivalently describe $\psi$ as follows
		\begin{equation}\label{immersion2}
			\psi(x,y) = \Big(x-\frac{x^{3}}{3}+xy^{2},-y-x^{2}y+\frac{y^{3}}{3},x^{2}-y^{2},\sqrt{2}x-\frac{\sqrt{2}}{3}x^{3}+\sqrt{2}xy^{2}\Big), 
		\end{equation}
		for all $(x,y)\in\R^2$.
		
		Next, we prove that the Riemannnian metric \eqref{induced metric} is complete. Recall that for a non-compact Riemannian manifold, completeness is equivalent to the following property \cite[p. 153]{DoCarmo}: every divergent curve starting at any point, has infinite length, i.e., in our case, for any (smooth) curve $\gamma : [0,\infty) \rightarrow \mathbb{C}$, such that for every compact subset $C$ of $\mathbb{C}$ there exists $t_0 \in (0,\infty)$ such that $\gamma (t) \notin C$ for any $t>t_0$, we have
		\[   
		\lim_{T \to \infty}\int_0^{T}\sqrt{g(\gamma{\,'}(t),\gamma{\,'}(t))}\,dt=\infty\,.
		\]
		But this is true because the euclidean length of a divergent curve in $\mathbb{C}$ is $\infty$, and 
		\[
		\int_0^{T}\sqrt{g(\gamma{\,'}(t),\gamma{\,'}(t))}\,dt \geq \int_0^{T}|\gamma{\,'}(t)|\,dt
		\]
		holds true, making use of \eqref{induced metric}.\\
		
		On the other hand, being the induced metric \eqref{induced metric} pointwise conformal to the Euclidean one of $\mathbb{C}$, its Gauss curvature, $K$, satisfies
		\begin{equation}\label{Gauss curvature}
			K(z) = -\frac{1}{(1+|z|^2)^2}\Delta\log (1+|z|^2)=\frac{-4}{(1+|z|^2)^4}<0,
		\end{equation}
		where $\Delta=\dfrac{\partial^2}{\partial x^2}+\dfrac{\partial^2}{\partial y^2}$ is the usual Laplacian. Therefore, $\psi$ is not totally geodesic.\\ 
		
		Finally,  we compute $\langle \partial_{t}^{N},  \partial_{t}^{N}\rangle .$ As in Example \ref{Ex1} we have
		\[
		\partial_{t}^{\top} = \Big\langle\partial_{t}^{\top},\frac{1}{\lambda}\frac{\partial \psi}{\partial x}\Big\rangle\frac{1}{\lambda}\frac{\partial \psi}{\partial x}
		+\Big\langle\partial_{t}^{\top},\frac{1}{\lambda}\frac{\partial \psi}{\partial y}\Big\rangle\frac{1}{\lambda}\frac{\partial \psi}{\partial y} \,,
		\]
		where $\lambda(x,y)=1+x^2+y^2$. Using now \eqref{immersion2} we get 
		\[
		\Big\langle \partial_{t}^{\top},\frac{\partial \psi}{\partial x}\Big\rangle  =\Big\langle \partial_{t},\frac{\partial \psi}{\partial x}\Big\rangle = -(1-x^2+y^2)\quad \text{and}\quad \Big\langle \partial_{t}^{\top},\frac{\partial \psi}{\partial y}\Big\rangle  =\langle \partial_{t},\frac{\partial \psi}{\partial y}\Big\rangle = -2xy,
		\] 
		and therefore
		\[
		\langle \partial_{t}^{\top}, \partial_{t}^{\top}\rangle  = \left(\frac{1-x^2+y^2}{1+x^2+y^2}\right)^{2}+\left(\frac{2xy}{1+x^2+y^2}\right)^{2}. 
		\]
		Thus, we obtain 
		\[
		u=-\langle \partial_{t}^{N}, \partial_{t}^{N}\rangle = 1+\langle \partial_{t}^{\top}, \partial_{t}^{\top}\rangle= 2\,\frac{(1+x^2+y^2)^2-2x^2}{(1+x^2+y^2)^2}\leq 2,
		\]
		for any $(x,y)\in \R^2$, i.e., the function $u$ on $\R^2$ is bounded. Therefore, assumption  \eqref{Kgeqzero}  in Theorem \ref{teo 3.2} cannot be dropped, although \eqref{subquadratic} is satisfied as this example shows.
		
	\end{example}
	\section*{Acknowledgements}
	
The first author was partially supported by CAPES//Brazil. The second author was partially supported by CNPq, Brazil, Universal Grant 438601/2018-1, and the third named author has been partially supported by the Spanish MICINN and ERDF project PID2020-116126GB-I00, and by the Andalusian and ERDF project A-FQM-494-UGR18.

Danilo Ferreira and Eraldo A. Lima Jr.\\
{\sc Departamento de Matemática, Universidade Federal da Paraíba, 58051-900 João Pessoa-PB Brazil}.\\
\textit{Email-address: danilo.silva@academico.ufpb.br}\\
\textit{Email-address: eraldo.lima@academico.ufpb.br}\\

 Alfonso Romero\\
{\sc Departamento de Geometría y Topología, Universidad de Granada, 18071 Granada
Spain}\\
\textit{Email-address: aromero@ugr.es}

\end{document}